\newtheorem{theorem}{Theorem}[section]
\newtheorem{lemma}[theorem]{Lemma}
\newtheorem{definition}[theorem]{Definition}
\newtheorem{remark}[theorem]{Remark}
\newtheorem{hypothese}[theorem]{Hypothese}
\numberwithin{equation}{section}
\begin{document}

\title{The Hautus test for non-autonomous linear evolution equation}
\author{Duc-Trung Hoang\\
Institute Mathematics of Bordeaux, France. \\
dthoang@math.u-bordeaux.fr}
\date{}
\maketitle
\abstract{
  In this paper, we investigate the Hautus test for evolution equation with the operators depending on time.}

\section{Introduction}
Controllability and observability are basis concepts in system theory and control theory. They are important structural properties which have close relationships with the stability of state feedback controllers abd state observers. In this paper, we will study the controllability, the observabilty, the duality between these two concepts for the non autonomous linear system. These properties were studied well for the autonomous system.
Let $H$ be a Hilbert space. Considering $U(t,s)$ the evolution family of two variables generating by the family of operators $A(t)$: $A(t): D(A(t)) \mapsto H$. Let $U$ be another Hilbert space and suppose $C: H \rightarrow U$ is a linear operator.  We consider the system :
\begin{equation}
  \label{1}
\left\{
  \begin{array}{lcl}
  x'(t) + A(t) x(t) &=& B(t)u(t) \\ x(0) &=& x_0 \\ y(t) &=& Cx(t)
  \end{array}\right.
\end{equation}
For simplicity, we denote the above system as $(A(t),B(t),C)$.  We always assume that the family of operator $A(t)$ is bounded from $H \mapsto H$. The solution is defined as
\[x(t) = U(t,0)x_0 + \int_{0}^{t}U(t,s)B(s)u(s) ds , \quad t \geq s\]

\begin{definition}

\end{definition}
A family $\{U(t,s)_{t,s}\}$  operators is called an evolution family if it satisfies the following conditions : \\
(i) $U(t,t)x = x$ for all $t \geq 0$ and $x \in H$; \\
(ii) $U(t,s) = U(t,r).U(r,s)$ for all $ t \geq r \geq s \geq 0$
Such an evolution family is called continous if there exist $M, \omega > 0$ such that \\
(iii) $\|U(t,s)\| \leq Me^{\omega(t-s)}$ \\
(iv) $U(t,s)x$ is jointly continuous with respect to $t,s$ and $x$

\begin{definition}
The system (\ref{1}) is said to be exactly controllable at time $\tau$ if for every $x_0, x_1$ in $H$, there exist $u \in L^2(0,\tau;U)$  such that the solution satisfy $x_1 = x(\tau)$
\end{definition}

\begin{definition}
The system (\ref{1}) is said to be exactly null controllable at time $\tau$ if for every $x_0$ in $H$, there exist $u \in L^2(0,\tau;U)$  such that the solution satisfy $x(\tau) = 0$
\end{definition}

\begin{definition}
The system (\ref{1}) is said to be observable in $[0,\tau]$ if the map $ O: H \rightarrow L^2(0,\tau;U) : x_0 \rightarrow y(.) $ is injective. 
\end{definition}

The definition express the fact that we can recover uniquely the initial state from a knowledge of the output $y(.)$ in the time interval $[0, \tau]$. When the system is observable, we refer to $(C,A(t))$ as an observable pair. For one variable $s$ fixed, $A(s)$ generate a strongly continuous semigroup $T_s(t)$.
We assume that the domaine $D(A(t))$ is densed in $H$ and independent of $t$. We consider the adjoint system

\begin{equation}
  \label{2}
\left\{
  \begin{array}{lcl}
  z'(t) - A(t)^* z(t) &=& 0 \\ z(\tau) &=& z_{\tau} \\ h(t) &=& B(t)^*x(t)
  \end{array}\right.
\end{equation}

Russell and Weiss (\cite{russell}) showed that a necessary condition for exact observability of exponentially systems is the following Hautus test : There exits a constant $m > 0$ such that for every $s \in \mathbb{C}_{-}$ and every $x \in D(A)$. The Hautus test can be use for approximate observability of exponentially stable systems [3], for polynomially stable system [4], for exact observability of strongly stable Riesz-spectral systems with finite dimensional output spaces [5], and for exponentially stable $C_0-$groups [6]

\[\|(sI-A)x\|^2+|Re s|\|Cx\|^2 \geq |Re s|^2\|x\|^2\]
where $\mathbb{C}_{-}$ denotes the open left half plane.

\section{Duality of controllability and observability}

Let $A(t)$ be such that the uncontrolled initial value problem 
\begin{equation}
  \label{eq:evol-eq-without-control}
\left\{
  \begin{array}{lcl}
  x'(t) + A(t) x(t) &=& 0 \\ x(0) &=& x_0
  \end{array}\right.
\end{equation}
admits a evolution (solution) family $U(t, s)$.  We observe that
\[
U(t, s+h) - U(t, s) = U(t, s+h) [ I - U(s+h, s) ]
\]
and so, dividing by $h>0$ and letting $h \to 0+$,
\[
\tfrac{d}{ds{+}} U(t, s) = - U(t, s) A(s)
\]
Under mild extra conditions, this derivative will exist in both directions. 
Now we take adjoints:
\[
\tfrac{d}{ds{+}} \langle U(\tau, s)^* z_\tau ,x \rangle 
= 
\tfrac{d}{ds{+}}  \langle  z_\tau , U(\tau, s) x \rangle 
=
\langle z_\tau , - U(\tau, s) A(s) x \rangle
=
\langle - A(s)^* U(\tau, s)^* z_{\tau}, x \rangle.
\]
This holds for all $x$, so we may drop duality pairing and obtain that
 $z(t) := U(\tau, t)^* z_\tau$ will solve the dual final time problem
\begin{equation}
  \label{eq:dual-evol-eq}
\left\{
  \begin{array}{lcl}
  z'(t) {\mathbf-} A(t)^* z(t) &=& 0 \\ z(\tau) &=& z_\tau
  \end{array}\right.
\end{equation}

\subsection{Duality}
Now consider
\begin{equation}
  \label{eq:evol-eq-with-control}
\left\{
  \begin{array}{lcl}
  x'(t) + A(t) x(t) &=& B(t)u(t) \\ x(0) &=& 0
  \end{array}\right.
\end{equation}
and assume, that the map $\Psi_\tau: L_2(0, \tau; U) \to H$ defined by
$u \mapsto x(\tau)$ is continuous (i.e. that $B(t)_{0\le t \le \tau}$
is an admissible family of control operators for the evolution
equation). Assume further {\bf exact controllability}, i.e.  that for
{any} $x_\tau \in X$, we can find some $u\in L_2(0, \tau; U)$ such
that the solution of the initial value problem
(\ref{eq:evol-eq-with-control}) satisfies $x(\tau) = x_\tau$. Then
$\Psi_\tau$ is bounded and surjective. \\

$\Psi_\tau u = \int_0^\tau U(\tau, s)
B(s) u(s)\,ds$ and so $(\Psi_\tau^* x^*)(t) = B(t)^* U(\tau, t)^* x^*$.

According to the ``standard lemma'', $\Psi_\tau$ is surjective iff
$\Psi_\tau^*$ allows lower estimates, i.e. iff
\[
  \delta \|x^*\| \le \|B(t)^* U(\tau, t)^* x^*\|_{L_2(0, \tau)}
\]

slow solution ``a la main'' 

By the open mapping theorem we
then have a constant $C>1$ such that $ \|u\|_{L_2} \le C
\|x_\tau\|$.  We can therefore simply let $z_\tau = x_\tau$ in
(\ref{eq:dual-evol-eq}), and consider 
\[
\tfrac{d}{dt} \langle x(t) , z(t) \rangle 
= 
\langle B(\cdot) u(\cdot) - A(\cdot) x(\cdot) , z(t) \rangle + \langle x(\cdot) ,  A(\cdot)^* z(\cdot) \rangle
= \langle B(\cdot) u(\cdot), z(t)\rangle.
\]
Integrating from $0$ to $\tau$ (recall $x(0)=0$), one obtains 
\begin{equation}
  \label{eq:key}
\| x_\tau \|_H^2 = \| z_\tau \|_H^2 = \int_0^\tau \langle  u(s), B(s)^* z(s)\rangle\,ds 
\end{equation}
Now, by Cauchy-Schwarz and the hypothesis $\|{u}\|_{L_2} \le C
\|z_\tau\| $
\[
\| z_\tau \|_H^2
\le  \|u\|_{L_2} \|B(\cdot)^* z(\cdot) \|_{L_2(0, \tau; H)} 
\le C \|z_{\tau}\|_H \Bigl(\int_0^\tau \|B(s)^* z(s) \|_H^2\,ds\Bigr).
\]
Dividing by $\|z_\tau\|$, this gives the ``observability estimate''
of the adjoint problem (\ref{eq:dual-evol-eq}), that is, the estimate
\begin{equation}
  \label{eq:observ-estim}
\|z_\tau \|_H 
\le C  \Bigl(\int_0^\tau \|B(s)^* z(s) \|_H^2\,ds\Bigr).
\end{equation}
For the converse direction we assume (\ref{eq:observ-estim}),
i.e. {\bf exact observability} of the dual system. We aim to obtain
surjectivity of $\Psi_\tau$.  

\begin{theorem}
The system is exactly controllable on $0 \leq \tau < \infty$ if and only if there exists $C > 0$ such that for all $x \in H$, we have 

\begin{equation}
  \label{eq:observ-estim}
\|x_\tau \|_H 
\le C  \Bigl(\int_0^\tau \|B(s)^* U(\tau,s)x \|_H^2\,ds\Bigr)
\end{equation}

\end{theorem}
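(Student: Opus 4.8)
The plan is to recognize that the asserted estimate is nothing but a lower bound for the adjoint of the input-to-state map, and that exact controllability is its surjectivity; the equivalence is then the familiar functional-analytic duality already used in the preceding subsection. Concretely, set $\Psi_\tau u := \int_0^\tau U(\tau,s)B(s)u(s)\,ds$, a bounded operator $L_2(0,\tau;U)\to H$ by the admissibility assumption. As computed above, $(\Psi_\tau^* x)(t) = B(t)^* U(\tau,t)^* x$ for $x\in H$ (using the Hilbert-space identification $H\cong H^*$, and, in \eqref{eq:key}, $z_\tau = x_\tau = x$; here the $U$ in the stated inequality should carry an adjoint and the integral a square root). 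Thus the right-hand side of the claimed estimate is $C\|\Psi_\tau^* x\|_{L_2(0,\tau;H)}$, and the estimate says precisely that $\Psi_\tau^*$ is bounded below by $1/C$.

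For the forward direction I would assume exact controllability, i.e.\ that $\Psi_\tau$ is bounded and onto, and invoke the ``standard lemma'': a surjective bounded operator between Hilbert spaces has an adjoint that is bounded below. I would recall its short proof --- the open mapping theorem gives $B_H(0,\delta)\subseteq\Psi_\tau(B_{L_2}(0,1))$ for some $\delta>0$, whence $\|\Psi_\tau^* x\|_{L_2} = \sup_{\|u\|_{L_2}\le1}|\langle \Psi_\tau u, x\rangle| \ge \delta\|x\|_H$ --- and then read off $C=1/\delta$. Alternatively one can argue exactly as in the lines leading to \eqref{eq:key}: the open mapping theorem gives $u$ with $\Psi_\tau u=x$ and $\|u\|_{L_2}\le C\|x\|_H$, the computation of $\tfrac{d}{dt}\langle x(\cdot),z(\cdot)\rangle$ yields $\|x\|_H^2=\int_0^\tau\langle u(s),B(s)^*U(\tau,s)^* x\rangle\,ds$, and Cauchy--Schwarz finishes.

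For the converse I would assume the estimate, equivalently $\delta\|x\|_H\le\|\Psi_\tau^* x\|_{L_2}$ for all $x\in H$ with $\delta=1/C$, and deduce surjectivity of $\Psi_\tau$. The estimate forces $\Psi_\tau^*$ to be injective with closed range; by the closed range theorem $\Psi_\tau$ then has closed range, and since $\overline{\operatorname{ran}\Psi_\tau}={}^\perp\!\ker\Psi_\tau^*={}^\perp\{0\}=H$, we get $\operatorname{ran}\Psi_\tau=H$, which is exact controllability. If one prefers to avoid the closed range theorem, the same conclusion follows from a Hahn--Banach separation: if some $x_0$ with $\|x_0\|_H<\delta$ lay outside $\overline{\Psi_\tau(B_{L_2}(0,1))}$, a separating functional $x^*$ would satisfy $\langle x^*,x_0\rangle>\sup_{\|u\|_{L_2}\le1}\langle x^*,\Psi_\tau u\rangle=\|\Psi_\tau^* x^*\|_{L_2}\ge\delta\|x^*\|_H\ge\langle x^*,x_0\rangle$, a contradiction; hence $B_H(0,\delta)\subseteq\overline{\Psi_\tau(B_{L_2}(0,1))}$, and a standard successive-approximation argument upgrades this to $B_H(0,\delta/2)\subseteq\Psi_\tau(B_{L_2}(0,1))$.

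I expect the backward implication to be the step requiring most care: a lower bound on $\Psi_\tau^*$ only yields \emph{dense} range of $\Psi_\tau$ for free, so one genuinely has to establish closedness of the range --- via the closed range theorem, or the Hahn--Banach-plus-iteration argument above --- before concluding surjectivity. Everything else is routine: checking the formula for $\Psi_\tau^*$, and keeping track of the identifications $H\cong H^*$ and $z_\tau=x_\tau$ that let \eqref{eq:key} be read as $\|x_\tau\|_H^2=\langle u,\Psi_\tau^* x_\tau\rangle_{L_2(0,\tau;H)}$ in the Cauchy--Schwarz step.
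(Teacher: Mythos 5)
Your argument is correct, and your forward direction is essentially the paper's: the paper either invokes the same ``standard lemma'' (surjectivity of $\Psi_\tau$ gives a lower bound for $\Psi_\tau^*$) or runs the explicit computation with $z_\tau=x_\tau$, the identity $\|x_\tau\|_H^2=\int_0^\tau\langle u(s),B(s)^*U(\tau,s)^*x_\tau\rangle\,ds$, and Cauchy--Schwarz, exactly as you propose; you are also right that the displayed estimate in the theorem is misstated (it needs $U(\tau,s)^*$, a square root on the integral, and $x=x_\tau$), and the paper commits the same homogeneity slip. Where you genuinely differ is the converse. The paper forms the controllability Gramian $W_\tau=\Psi_\tau\Psi_\tau^*=\int_0^\tau U(\tau,s)B(s)B(s)^*U(\tau,s)^*\,ds$, reads the estimate as coercivity $\langle W_\tau z,z\rangle\ge C^{-2}\|z\|^2$, concludes that the self-adjoint coercive $W_\tau$ is boundedly invertible, and gets surjectivity from $H=\mathrm{Im}(W_\tau)\subset\mathrm{Im}(\Psi_\tau)$; this route is slightly more constructive, since it hands you the control $u=\Psi_\tau^*W_\tau^{-1}x_\tau$ that the paper reuses in the minimum-norm section. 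You instead pass from the lower bound on $\Psi_\tau^*$ to surjectivity of $\Psi_\tau$ via the closed range theorem (or the Hahn--Banach separation plus successive approximation), which is a perfectly valid and somewhat more general functional-analytic argument, and you correctly flag the real issue both proofs must address --- that a lower bound on the adjoint gives dense range for free but closedness of the range is the substantive step, which in the paper's version is hidden in the bounded invertibility of the coercive $W_\tau$ (Lax--Milgram) rather than in the closed range theorem.
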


For the converse,  we define the controllability Gramian as $W_t = \Psi_\tau\Psi_\tau^* = \int_{0}^{t}U(t,s)B(s)B(s)^*U(t,s)^* ds$ be the operator depending on $t$.
Now assuming that  $\|z_\tau \|_H 
\le C  \Bigl(\int_0^\tau \|B(s)^* z(s) \|_H^2\,ds\Bigr)$. We have
\[
\|z_\tau \|_H 
\le C  \Bigl(\int_0^\tau \|B(s)^* U(\tau,s)^*z_\tau \|_H^2\,ds\Bigr)
\]
Or
\[
\|z_\tau \|^2_H 
\le C^2 \|\Psi_\tau^*z_\tau \|^2 = C^2\langle \Psi_\tau^*z_\tau, \Psi_\tau^*z_\tau \rangle  = C^2\langle \Psi_\tau\Psi_\tau^*z_\tau, z_\tau \rangle = \langle W_\tau z_\tau, z_\tau \rangle 
\]
Hence, we conclude that $W_\tau$ is self-adjoint, injective and coercive operator. Then  $W_\tau$ is boundedly invertible. Hence, Im$(W_\tau)$ $= D(W_\tau)^{-1} = H)$. 
This implies Im$(\Psi_\tau)$ $= H$ because $H = $ Im$(W_\tau)$ $\subset$ Im$(\Psi_\tau)$. This indicates the controllability of the initial system
 
\subsection{Necessary condition}
If we take $G(t,s)= U(t,s)B(s)B(s)^*U(t,s)^*$ be the function of two variables $s,t$ with $0 \leq s \leq t$. Then
\[ \frac{d}{dt}G(t,s) = A(t)G(t)+G(t)A(t)^*\]
Now if we take the integral from $0$ to $t$ with respected to the variable $s$, we have:
The controllability $W(t)$ is the unique solution of the equation

\[\frac{d}{dt}W(t)=A(t)W(t)+W(t)A(t)^*+B(t)B(t)^*\] 
Noting that the operator $W_t=\Psi_t\Psi_t^*$. We assume that $W_\tau$ is not invertible. Since, $W_\tau \geq 0$, there exists the sequence ${z_n} \in H $ such that $\|z_n\| =1 $ and $\langle z_n , W_\tau z_n \rangle \rightarrow 0$. It follows that 
\[
\int_0^tau\|U(\tau,s)^*B(s)^*z_n\|^2 dt \rightarrow 0 
\]

We also have a noting that the control function $u(t)$ and the out put function satisfy the following 
\[\int_{0}^{\tau} \langle u(t) , y(t) \rangle=0\]

\subsection{Null controllability}

The system 

\begin{equation}
  \label{eq:evol-eq-with-control}
\left\{
  \begin{array}{lcl}
  x'(t) + A(t) x(t) &=& B(t)u(t) \\ x(0) &=& 0
  \end{array}\right.
\end{equation}

is exactly null controllable on $[0,\tau]$ if for all $x_0$, we can find $ u\in L_2(0,\tau; U) $ such that 
\[ 0 = U(\tau,0)x_0 + \int_{0}^{\tau}U(\tau,s)B(s)u(s)ds\]
\[ Ran(U(\tau,s)) \subset Ran(u \rightarrow \int_{0}^{\tau}U(\tau,s)B(s)u(s)ds)\]

We define the operator $S : x_0 \rightarrow U(\tau,s)x_0$ and $T: u \rightarrow \int_{0}^{\tau}U(\tau,s)B(s)u(s)ds$.

\begin{lemma}( see \cite{tucsnack})
Suppose that $Z_1, Z_2, Z_3$ are Hilbert spaces, the operators $F \in L(Z_1, Z_3)$ and $G \in L(Z_2, Z_3)$. Then the following statements are equivalent: \\
(a) $Ran(F) \subset Ran(G)$ \\
(b) There exists a $c >0$ such that $\|F^*z\|_{Z_1} \leq c\|G^*z\|_{Z_2}$ for all $z \in Z_3$ \\
(c) There exist an operator $U \in L(Z_1,Z_2)$ such that $F = GU$
\end{lemma}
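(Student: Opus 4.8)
The plan is to prove the chain $(a)\Rightarrow(c)\Rightarrow(b)$ together with $(b)\Rightarrow(c)$, and to note that $(c)\Rightarrow(a)$ is immediate; this makes the three statements equivalent. The two easy implications come first. For $(c)\Rightarrow(a)$: if $F=GU$ then for every $z\in Z_1$ we have $Fz=G(Uz)\in\operatorname{Ran}(G)$, hence $\operatorname{Ran}(F)\subset\operatorname{Ran}(G)$. For $(c)\Rightarrow(b)$: from $F=GU$ we get $F^*=U^*G^*$, so $\|F^*z\|_{Z_1}=\|U^*G^*z\|_{Z_1}\le\|U^*\|\,\|G^*z\|_{Z_2}=\|U\|\,\|G^*z\|_{Z_2}$, which is $(b)$ with $c=\|U\|$.

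For $(a)\Rightarrow(c)$ I would use the orthogonal decomposition $Z_2=\ker G\oplus\overline{\operatorname{Ran}(G^*)}$, valid because $Z_2$ is a Hilbert space and $\ker G=\operatorname{Ran}(G^*)^\perp$. Since $\operatorname{Ran}(F)\subset\operatorname{Ran}(G)$, every vector $Fz$ has at least one preimage under $G$, and exactly one of these preimages lies in $(\ker G)^\perp=\overline{\operatorname{Ran}(G^*)}$; call it $Uz$. Uniqueness of this preimage (two such would differ by an element of $\ker G\cap(\ker G)^\perp=\{0\}$) makes $U$ linear, and $GUz=Fz$ by construction. Boundedness of $U$ is the one point that needs real work, and I would get it from the closed graph theorem: if $z_n\to z$ in $Z_1$ and $Uz_n\to w$ in $Z_2$, then $G(Uz_n)=Fz_n\to Fz$ by continuity of $F$ and $G$, so $Gw=Fz$; moreover $w\in(\ker G)^\perp$ because that subspace is closed, so $w$ is the distinguished preimage, i.e.\ $w=Uz$. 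Hence the graph of $U$ is closed, $U\in L(Z_1,Z_2)$, and $F=GU$.

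For $(b)\Rightarrow(c)$ I would construct the factoring operator on the adjoint side. The estimate in $(b)$ shows that the rule $G^*z\mapsto F^*z$ is well defined on $\operatorname{Ran}(G^*)\subset Z_3$: if $G^*z_1=G^*z_2$ then $G^*(z_1-z_2)=0$ forces $\|F^*(z_1-z_2)\|\le c\|G^*(z_1-z_2)\|=0$. This rule is linear and bounded by $c$, so it extends continuously to $\overline{\operatorname{Ran}(G^*)}$ and, composed with the orthogonal projection onto that subspace, yields $\Psi\in L(Z_3,Z_1)$ with $\|\Psi\|\le c$ and $\Psi G^*=F^*$. Taking adjoints gives $G\Psi^*=F$, so $(c)$ holds with $U=\Psi^*$.

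The main obstacle is the boundedness of $U$ in $(a)\Rightarrow(c)$: one must select the \emph{minimal-norm} preimage (equivalently the one in $(\ker G)^\perp$) rather than an arbitrary one, because only this choice makes the graph of $U$ closed — an arbitrary linear selection need not be continuous. A lesser point to handle carefully is the identification $\overline{\operatorname{Ran}(G^*)}=(\ker G)^\perp$ used in $(b)\Rightarrow(c)$, which guarantees that the extension-and-projection procedure produces an operator defined on all of $Z_3$ and still satisfying $\Psi G^*=F^*$.
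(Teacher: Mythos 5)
Your proof is correct in substance, and it is worth noting that the paper itself gives no argument for this lemma at all: it is stated with a bare citation to Tucsnak--Weiss, so you are supplying a proof where the paper has none. What you wrote is the standard Douglas-factorization argument, and it is essentially the one in the cited reference: $(c)\Rightarrow(a)$ and $(c)\Rightarrow(b)$ are immediate, $(a)\Rightarrow(c)$ by selecting the minimal-norm preimage in $(\ker G)^{\perp}$ and invoking the closed graph theorem, and $(b)\Rightarrow(c)$ by defining $G^*z\mapsto F^*z$ on $\operatorname{Ran}(G^*)$, extending by continuity, and dualizing. The only blemish is a bookkeeping slip in the last step: since $G\in L(Z_2,Z_3)$, its adjoint maps $Z_3\to Z_2$, so $\operatorname{Ran}(G^*)\subset Z_2$ (not $Z_3$), the extended-and-projected operator satisfies $\Psi\in L(Z_2,Z_1)$ (not $L(Z_3,Z_1)$), and then $U=\Psi^*\in L(Z_1,Z_2)$ as required; with the spaces as you literally wrote them, $\Psi^*$ would land in the wrong space. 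Correcting these labels is purely cosmetic, and your identification of the genuinely delicate points (the minimal-norm selection making the graph closed, and $\overline{\operatorname{Ran}(G^*)}=(\ker G)^{\perp}$) is exactly right.
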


We will assume that $A(t)A(s)^{-1}$ are uniformly bounded for $s,t \in [0, \infty)$, $A(\infty)$ and $lim_{t \rightarrow \infty}\|(A(t)-A(\infty))A(0)^{-1}\|$ = 0. Then by [1] (Theorem 8.1, chapter 5.8) there exists constant $M \geq 0$ and $v >0$ such that
\[\|U(t,s)\| \leq Me^{-v(t-s)}\]

Hence the operator  $S : x_0 \rightarrow U(\tau,s)x_0$ is a bounded operator from $H \rightarrow H$

\begin{lemma}
The operator $T: u \rightarrow \int_{0}^{\tau}U(\tau,s)B(s)u(s)ds$ is bounded linear map from $L^2([0,\tau]; U) \rightarrow H$
\end{lemma}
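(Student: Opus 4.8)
The plan is to establish the lemma by a direct norm estimate, using the exponential bound on the evolution family together with a uniform bound on the control operators. Linearity of $T$ is immediate from the linearity of the Bochner integral, so the whole content of the statement is the boundedness inequality.

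First I would pin down the standing hypotheses that make the integral defining $Tu$ meaningful: the joint continuity of $U(\tau,\cdot)$ from condition (iv), the strong measurability of $u \in L^2([0,\tau];U)$, and a uniform bound on the control operators, namely $b := \sup_{0 \le s \le \tau}\|B(s)\|_{L(U,H)} < \infty$. Under these, $s \mapsto U(\tau,s)B(s)u(s)$ is strongly measurable, and the pointwise estimate $\|U(\tau,s)B(s)u(s)\|_H \le M e^{-v(\tau-s)} b\,\|u(s)\|_U \le Mb\,\|u(s)\|_U$ together with $u \in L^2 \subset L^1$ on the bounded interval $[0,\tau]$ shows the integrand is Bochner integrable, so $Tu \in H$ is well defined.

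Next, I would bound the norm of the integral by the integral of the norm and apply the Cauchy--Schwarz inequality in the $s$-variable:
\[
\|Tu\|_H \le \int_0^\tau \|U(\tau,s)\|\,\|B(s)\|\,\|u(s)\|_U\,ds \le Mb\int_0^\tau \|u(s)\|_U\,ds \le Mb\sqrt{\tau}\,\Bigl(\int_0^\tau \|u(s)\|_U^2\,ds\Bigr)^{1/2}.
\]
This yields $\|Tu\|_H \le Mb\sqrt{\tau}\,\|u\|_{L^2([0,\tau];U)}$, hence $T \in L(L^2([0,\tau];U),H)$ with $\|T\| \le Mb\sqrt{\tau}$. (One may note in passing that $T = \Psi_\tau$, so this also makes the earlier admissibility assumption on $\Psi_\tau$ explicit in this setting.)

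The only genuine obstacle is the case of possibly unbounded control operators $B(s)$: if one only assumes $B(s)$ maps into a larger extrapolation space or satisfies merely an admissibility condition, the crude pointwise estimate above breaks down and one must instead invoke (or prove directly) an admissibility-type inequality for the family $B(\cdot)$. Since the surrounding development already assumes $A(t)$ bounded on $H$ and manipulates $B(t)^*$ pointwise, I would simply adopt the uniform-boundedness hypothesis on $B(\cdot)$ at the outset of the proof; with that in hand, the argument is routine.
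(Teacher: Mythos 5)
Your proposal is correct and follows essentially the same route as the paper: bound the norm of the integral by the integral of the norm, use the exponential estimate on $U(\tau,s)$ and a uniform bound on $\|B(s)\|$, then apply Cauchy--Schwarz to get a constant times $\|u\|_{L^2([0,\tau];U)}$. The only differences are cosmetic --- the paper keeps the weight $e^{-v(\tau-s)}$ inside the Cauchy--Schwarz step to get a slightly sharper constant, while you bound it by $1$ and obtain $Mb\sqrt{\tau}$, and you are more explicit than the paper about strong measurability and the supremum bound on $B(\cdot)$.
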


\begin{proof}
We have:
\begin{align*}
T(u) 
= & \; \int_{0}^{\tau}U(\tau,s)B(s)u(s)ds \\
\leq & \;\int_{0}^{\tau}\|U(\tau,s)B(s)u(s)\|_H ds \\
\leq & \; \int_{0}^{\tau}Me^{-v(\tau -s)}\|B(s)\|_{L(U,H)}\|u(s)\|_H ds \\
\leq & \; Me^{-v\tau}\|B(t)\|_{L(U,H)} \int_{0}^{\tau}e^{vs}\|u(s)\|_H ds \\
\leq & \; Me^{-v\tau}\|B(t)\|_{L(U,H)}(\int_{0}^{\tau}e^{2vs}ds \int_{0}^{\tau}\|u(s)\|^2_H)^{1/2}\\
\leq & \; Me^{-v\tau}\|B(t)\|_{L(U,H)} \frac{1}{2}(\frac{e^{2v\tau} - 1}{v})^{1/2}\|u(s)\|_{L^2([0,\tau];U)}
\end{align*}
\end{proof}

Since $S,T$ are bounded operator, using the lemma there exists a constant $c > 0$ such that
\[ \|S^*x\| \leq c\|T^*x\|  \]

By computation: $T^*x = B^*(s)U(\tau, s)^*x^*$ and $S^*x = U(\tau,s)^*x^*$. Then we obtain the inequality:

\[\|B^*(s)U(\tau, s)^*x^*\| \geq \|U(\tau,s)^*x^*\| \]

%
%

\subsection{Minimum cost controls}
We have $x_\tau = U(\tau,0)x_0+\int_0^\tau U(\tau,s)B(s)u(s)ds$. It is easy to check that the control $\tilde{u} = B(s)^*U(\tau,s)^*W_\tau^{-1}(x_\tau-U(\tau,0)x_0)$ satisfies the equation.

We will indicate that $\tilde{u}$ takes the $L_2$ minimum-norm. Suppoing that both $u$ and $\tilde{u}$ satisfy the equation. Then we have
 
 \[
 \int_0^ \tau U(\tau,s)B(s)(u(s)-\tilde{u}(s)) ds = 0
  \]
  For all $\eta \in H$, we have
   
 \[
\langle \int_0^ \tau U(\tau,s)B(s)(u(s)-\tilde{u}(s)) ds , \eta \rangle = 0
  \]
  
If we choose $\eta = W_\tau^{-1}(x_\tau-U(\tau,0)x_0) $, then
      
 \[
\langle u(s)-\tilde{u}(s),  \tilde{u}(s)\rangle = 0
  \]
This impiles $\|u\|_{L^2}^2 \geq \|\tilde{u}\|_{L^2}^2$. In fact,

 \begin{align*}
 \|\tilde{u}\|_{L^2}^2 
= & \;  \int_0^ \tau \langle \tilde{u}, \tilde{u}  \rangle ds \\
= & \; \int_0^ \tau \langle B(s)^*U(\tau,s)^*W_\tau^{-1}(x_\tau-U(\tau,0)x_0) , B(s)^*U(\tau,s)^*W_\tau^{-1}(x_\tau-U(\tau,0)x_0) \rangle ds \\
= & \; \langle W_\tau (x_\tau-U(\tau,0)x_0, (x_\tau-U(\tau,0)x_0 \rangle \\
= & \; \|W_\tau^*(u)\|  
\end{align*}

\section{The Hautus test}

Observe that
\[
d/ds \left( e^{-\lambda s} U(t, s)x \right) = -\lambda e^{-\lambda s} U(t, s)x -  e^{-\lambda s} U(t, s) A(s)x
\]
and so, integrating on $[0, t]$, 
\[
- C  U(t, s) x = e^{-\lambda t} C x + \int_0^t C U(t, s) (\lambda + A(s))x  e^{-\lambda s}\,ds
\]
If we have exact observability, i.e. $\delta  \| x \|  \le  \| C U(t, 0)x  \| _{L_2(0, \tau)}$, this gives
\[
\delta  \| x \|  \le  \| Cx \|  / \sqrt{2 \Re(\lambda)} +  \| t \mapsto  \int_0^t C U(t, s) (\lambda + A(s))x  e^{-\lambda s}\,ds  \| _{L_2}
\]
However, for $g \in L_2$ of norm one, using admissibility (!) of $C^*$ for $U(t, s)^*$, 
\begin{align*}
|\langle  \int_0^t \| C U(t, s) (\lambda + A(s))x  e^{-\lambda s}\| ds. g \rangle |
= & \; \left|\int_0^\tau \int_0^t  \langle C U(t, s) (\lambda + A(s))x  e^{-\lambda s}, g(t) \rangle \,ds\,dt\right|\\
= & \; \left|\int_0^\tau  \langle (\lambda + A(s))x \, e^{-\lambda s},  \int_s^\tau  U(t, s)^* C^* g(t) \rangle \,dt\,ds\right|\\
\le & \; M \int_0^\tau \| (\lambda + A(s))x \, e^{-\lambda s} \|_{H} \| g\|_{L_2(s, \tau)}\,ds\\
\le & \; M \int_0^\tau \| (\lambda + A(s))x \, e^{-\lambda s} \|_{H} \,ds
  \end{align*}
we obtain the {\bf Hautus condition},

\[
\delta \|x\| \le \tfrac{\|Cx\|}{\sqrt{2 \Re(\lambda)}} + M  \int_0^\tau \| (\lambda + A(s))x  e^{-\lambda s} \|_{H} \,ds \qquad Re(\lambda)>0, x \in \bigcap_s D(A(s))
\]

as a {\bf necessry condition} for exact observability. Remark: in case
$A(s)=A$ this collapses down to the Hautus test of Russell-Weiss.

\bigskip

\begin{remark}
If $A(s) \in B(H)$ for all $s$,  we do not know whether we have ``IFF'' as in the autonomous case.
\end{remark}

\subsubsection{The sufficient condition}

We consider the case when $C(s) = C$. Supposing that $C$ is admissible operator and satisfy the inequality :
\begin{align*}
\delta \|x\| 
\le & \; \tfrac{\|Cx\|}{\sqrt{\Re(\lambda)}} + \int_0^\tau \| (\lambda + A(s)) x  e^{-\lambda s} \|_{H} \,ds \\
\le & \; \tfrac{\|Cx\|}{\sqrt{\Re(\lambda)}} + \frac{1}{Re \lambda}\int_0^\tau \| (\lambda + A(s))Re \lambda x  e^{-\lambda s} \|_{H} \,ds 
\end{align*}

Here we assume $Re \lambda > \eta > 0$, and uniformly stable. If $Re \lambda < w$, we use 
\[
\|(\lambda+A(s))^{-1}x\| \leq \frac{C}{Re \lambda} \|x\|
\]
even it is still true for $C=0$. We have the following theorem

\begin{theorem} (Alan's)
Let $D: \Omega \rightarrow L(X,Y)$ be an operator-valued function analytic in an open set $\Omega \subset \mathbb{C}$. If $D(\lambda)$ is left (resp.right) invertible for every $\lambda \in \Omega$m then there is an analytic operator function $E: \Omega \rightarrow L(Y,X)$ such that 
\[
E(\lambda)D(\lambda) = I_X
\]
\end{theorem}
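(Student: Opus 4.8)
The plan is to promote the pointwise one-sided inverses to a single analytic one by a local-to-global (Oka--Cartan type) argument. It suffices to treat the left-invertible case: a left inverse of $D(\lambda)$ is exactly the adjoint of a right inverse of $D(\lambda)^*$, so the right-invertible statement follows by passing to adjoints. \textbf{Step 1 (local left inverses).} Fix $\mu\in\Omega$ and choose $E_\mu\in L(Y,X)$ with $E_\mu D(\mu)=I_X$. Since $\lambda\mapsto E_\mu D(\lambda)\in L(X)$ is analytic and equals $I_X$ at $\mu$, on a small disc $\Delta_\mu\subset\Omega$ around $\mu$ we have $\|I_X-E_\mu D(\lambda)\|<1$, so $E_\mu D(\lambda)$ is invertible there with analytic inverse given by a Neumann series, and
\[
F_\mu(\lambda):=\bigl(E_\mu D(\lambda)\bigr)^{-1}E_\mu
\]
is an analytic $L(Y,X)$-valued left inverse of $D(\lambda)$ on $\Delta_\mu$. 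Letting $\mu$ vary gives an open cover $\{\Delta_\mu\}$ of $\Omega$ with analytic left inverses $F_\mu$ and analytic projections $P_\mu(\lambda):=D(\lambda)F_\mu(\lambda)$ of $Y$ onto $R(\lambda):=D(\lambda)X$.

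\textbf{Step 2 (reformulation as a cohomological splitting).} Let $\mathcal{K}$ be the sheaf on $\Omega$ of analytic $L(Y,X)$-valued functions $M$ with $MD\equiv 0$. The differences $G_{\mu\nu}:=F_\mu-F_\nu$ take values in $\mathcal{K}$ and form an additive $1$-cocycle for the cover $\{\Delta_\mu\}$. A global analytic left inverse of $D$ exists \emph{iff} this cocycle splits in $\mathcal{K}$: if $H_\mu\in\mathcal{K}(\Delta_\mu)$ satisfy $H_\mu-H_\nu=G_{\mu\nu}$, then $E:=F_\mu-H_\mu$ is independent of $\mu$, analytic on $\Omega$, and $ED=F_\mu D-H_\mu D=I_X$. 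Equivalently, because $M\mapsto MD$ is a pointwise surjective, locally analytically split sheaf morphism $\mathcal{O}(L(Y,X))\to\mathcal{O}(L(X))$ with kernel $\mathcal{K}$, the problem is to lift the global section $\lambda\mapsto I_X$ of $\mathcal{O}(L(X))$, and the obstruction lies in $H^1(\Omega,\mathcal{K})$.

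\textbf{Step 3 (vanishing of $H^1(\Omega,\mathcal{K})$).} The projections $P_\mu$ exhibit $R(\lambda)$ as an analytically locally trivial family of complemented subspaces of $Y$, so $\mathcal{K}$ is an analytic sheaf that is locally isomorphic to the free sheaf with fixed Banach fibre $L(Y_0,X)$ (here $Y_0$ is a local complement of $R$); i.e. $\mathcal{K}$ is the sheaf of sections of a holomorphic Banach vector bundle over $\Omega$. Since $\Omega$ is an open subset of $\mathbb{C}$, hence a one-dimensional Stein manifold, $H^1$ of such a sheaf vanishes. Concretely: take a smooth partition of unity $\{\chi_\nu\}$ subordinate to $\{\Delta_\mu\}$, set $\Phi_\mu:=\sum_\nu\chi_\nu G_{\mu\nu}$ (a smooth $\mathcal{K}$-valued function on $\Delta_\mu$ with $\Phi_\mu-\Phi_\nu=G_{\mu\nu}$), observe that the $\bar\partial\Phi_\mu$ glue to a global smooth $\mathcal{K}$-valued $(0,1)$-form $\omega$, solve $\bar\partial T=\omega$ with $T$ a global smooth section of $\mathcal{K}$ (solvability of $\bar\partial$ for Banach-valued forms on a planar domain, via the Cauchy transform on an exhaustion of $\Omega$), and put $H_\mu:=\Phi_\mu-T$. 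Then $H_\mu$ is analytic, lies in $\mathcal{K}$, and $H_\mu-H_\nu=G_{\mu\nu}$, which is what Step 2 needs.

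\textbf{Where the difficulty lies.} Steps 1--2 are formal; the substantive point is Step 3. The catch is that $\mathcal{K}$ is not a \emph{free} $\mathcal{O}$-module, only a locally free one whose fibre $L(Y_0,X)$ may be infinite-dimensional, so the finite-dimensional coherent-sheaf/Oka theory does not apply verbatim; one must invoke (or, in complex dimension one, reprove by hand with the Cauchy transform) the Oka principle for holomorphic Banach bundles over Stein manifolds in the spirit of Bungart and Allan. A prerequisite that also requires care is knowing that the family of ranges $R(\lambda)$ is genuinely analytically locally trivial, which is precisely what the local projections $P_\mu$ of Step 1 supply. An equivalent repackaging of the same difficulty is to \emph{complete $D$ to an invertible analytic function}: produce analytic $D_1\colon\Omega\to L(Z,Y)$ with $\bigl[\,D(\lambda)\ \ D_1(\lambda)\,\bigr]\colon X\oplus Z\to Y$ invertible for every $\lambda$ (possible because the complementary bundle with fibres $Y/R(\lambda)$ is holomorphically trivial over the planar domain $\Omega$), and then read off $E(\lambda)$ as the $X$-block of the analytic inverse.
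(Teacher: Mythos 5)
The paper itself offers no argument for this theorem (its ``proof'' is literally ``see (?)''; the result is Allan's theorem, proved in the literature via the theory of analytic one-sided inverses, cf.\ Allan and Gohberg--Leiterer), so your proposal cannot be compared with an in-paper proof; it has to stand on its own. On its own terms it is the standard and correct route: local analytic left inverses $F_\mu(\lambda)=(E_\mu D(\lambda))^{-1}E_\mu$ via a Neumann series, the observation that the obstruction to gluing is a $1$-cocycle with values in the kernel sheaf $\mathcal K=\{M: MD\equiv 0\}$, the identification of $\mathcal K$ as the section sheaf of a holomorphic Banach bundle (using the analytic projections $P_\mu=DF_\mu$, and the fact that an analytic family of projections is locally analytically similar to a constant one), and then the vanishing of $H^1$ over the planar domain $\Omega$ by Bungart's Oka--Cartan theory for Banach-valued sheaves. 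You correctly locate the substantive point in this last step.

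Two caveats, one of which is a real gap in the ``concrete'' version of Step 3. Solving $\bar\partial T=\omega$ by the Cauchy transform produces a smooth $L(Y,X)$-valued solution, but nothing forces that solution to satisfy $T(\lambda)D(\lambda)=0$: the fibre $\mathcal K(\lambda)$ varies with $\lambda$, and there is no global analytic projection of $L(Y,X)$ onto it available at this stage (a global projection onto the range bundle is essentially equivalent to the global left inverse you are trying to construct, so projecting the solution would be circular). To solve $\bar\partial$ \emph{within} $\mathcal K$ you must first use the local trivializations to reduce to a fixed fibre, i.e.\ you are genuinely using Bungart's Theorem-B-type result (or the holomorphic triviality of the bundle over a planar domain), not a pointwise-in-$\lambda$ integral formula; as a citation the step is fine, as a self-contained argument it is not yet complete. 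Second, the reduction of the right-invertible case by adjoints needs care: in the Hilbert-space setting the adjoint is conjugate-linear in $\lambda$, so you must work with $\lambda\mapsto D(\bar\lambda)^*$ to preserve analyticity and conjugate back at the end, and in non-reflexive Banach spaces the dualized statement returns an operator into $X^{**}$ rather than $X$. It is cleaner to run your Steps 1--3 symmetrically for right inverses (local right inverses $R_\mu(D(\lambda)R_\mu)^{-1}$, kernel sheaf $\{M: DM\equiv 0\}$), which works verbatim.
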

\begin{proof}
see (?)
\end{proof}
\begin{lemma}
If we have $\int_{a}^{b}\|f(s)x\|ds \geq \delta \|x\|$. There exist a subset $E \subset [a,b]$ such that $\lambda(E) > 0$ and $\|f(s)x\| \geq \frac{\delta}{b-a}\|x\|$
\end{lemma}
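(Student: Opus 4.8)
The plan is to prove this by contradiction through a one-line averaging (pigeonhole) estimate; the only ingredients are that $[a,b]$ has finite length and that an integrable function which is strictly positive almost everywhere has strictly positive integral. First I would dispose of the trivial case $x=0$, in which any subinterval of positive measure works, so from now on assume $x\neq 0$ and $a<b$. Put
\[
c:=\frac{\delta}{b-a}\,\|x\|,\qquad E:=\{\,s\in[a,b]:\ \|f(s)x\|\ge c\,\},
\]
where $E$ is a measurable set because $s\mapsto\|f(s)x\|$ is measurable (it is integrable, by the standing hypothesis). The claim is precisely that $\lambda(E)>0$.

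Suppose instead $\lambda(E)=0$. Then $\|f(s)x\|<c$ for almost every $s\in[a,b]$, so the integrable function $g(s):=c-\|f(s)x\|$ is nonnegative and strictly positive a.e.; hence $\int_a^b g(s)\,ds>0$, which is exactly the strict inequality
\[
\int_a^b \|f(s)x\|\,ds \;<\; c\,(b-a)\;=\;\delta\,\|x\|.
\]
This contradicts the hypothesis $\int_a^b\|f(s)x\|\,ds\ge\delta\|x\|$. Therefore $\lambda(E)>0$, and by the very definition of $E$ we have $\|f(s)x\|\ge\frac{\delta}{b-a}\|x\|$ for every $s\in E$, which is the assertion.

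I do not anticipate a genuine obstacle here: the lemma is a soft consequence of the hypothesis and the finiteness of $[a,b]$. The only place that deserves a careful word is why the almost-everywhere bound $\|f(s)x\|<c$ yields the \emph{strict} integral inequality rather than merely ``$\le$''. This is the familiar fact that, writing $\{g>0\}=\bigcup_{n\ge1}\{g>1/n\}$, at least one of the sets $\{g>1/n\}$ has positive measure once $g>0$ a.e.\ on a set of positive measure, and then $\int_a^b g\ge\frac1n\,\lambda(\{g>1/n\})>0$. With this in hand the proof is complete.
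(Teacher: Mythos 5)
Your argument is correct and is exactly the contradiction/averaging argument the paper has in mind (its own ``proof'' is only the remark that the claim follows easily by contradiction). You in fact supply the one point worth making explicit, namely why the almost-everywhere bound $\|f(s)x\|<\frac{\delta}{b-a}\|x\|$ forces the \emph{strict} inequality $\int_a^b\|f(s)x\|\,ds<\delta\|x\|$, so nothing further is needed.
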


\begin{proof}
By using contradiction, it is easy to verify.
\end{proof}

Due to the lemma,  there exist a non-null set $E \subset [0, \tau]$ such that for all $\lambda \in \mathbb{C}$ and $s \in E$, we have

\[
\frac{\delta}{2}\|x\| \leq \frac{Cx}{\sqrt{Re \lambda}} + \frac{1}{Re \lambda} \|(A(s)+\lambda)e^{-\lambda s}x\|
\]

We have the map $x \mapsto (Cx, (A(s)+\lambda) e^{-\lambda s}x)$ is left-invertible for $0 \leq s \leq \infty$ and $\lambda \in \mathbb{C}$. Hence there exists the analytic functions $U_s(\lambda)$ and $V_s(\lambda)$ satisfying the equation 

\[
V_s(\lambda)(A(s)+\lambda)e^{-\lambda s}x + V_s(\lambda)Cx = I
\]  

\[
V_s(\lambda)e^{-\lambda s}x + V_s(\lambda)C(A(s)+\lambda)^{-1} = (A(s)+\lambda)^{-1}
\]

Intergating both sides we get

\[
0 + \frac{1}{2\pi i}\int_{|\lambda = r|}^{}V_s(\lambda)C(A(s)+\lambda)^{-1} d\lambda = I
\]

\[
\sum_{k}V_k(s)CA(s)^k = I
\]
Then the map $x \mapsto (CA(s)^kx)_{k \geq 0}$ is left-invertible.
Now we suppose that the system is not exactly observablem, then there exits a sequence ${z_n}_{ n \geq 1}$ such that $\|z_n\| = 1$ and $\langle z_n , Q z_n \rangle \rightarrow 0$ .

\begin{theorem}(Vitali's theorem)

Let $f_n(z)$ be a sequence of functions, each regular in a region $D$, let $ |f_n(z)| \leq M$ for every $n$ and $z$ in D, and let $f_n(z)$ tend to a limit as $ n \rightarrow \infty$ at a set of points having a limit point inside $D$. Then $f_n(z)$ tends uniformly to a limit in any region bounded by a contour interior to $D$, the limit therefore being an analytic function of $z$.

\end{theorem}

\[
f_n(\lambda) = C(t)U(\lambda, s )z_n
\]

$\|f_n(.)\|_{L^{\infty}} \leq M$ on an open set $D$. We have $f_n(t) \rightarrow 0$ on the set with accumulation points. By the Vitali's theoremm $f_n$ is uniformly convergent to $f$ on a compact subset of $D$.  

\begin{hypothese}
The evolution family $U(\lambda , s)$ is holomorphic. If $A(t)$ is bounded uniformly, could we infer that $U(\lambda , s)$ is holomorphic. 
\end{hypothese}

Then there exists a subsequence of functions $f_{n_{k_l}}$ such that $f_{n_{k_l}} \rightarrow 0$ uniformly on a compact subset of $D$. The contour integral of $f_n$ at the point $ \lambda = \omega$ is defined as

\[\frac{1}{2\pi i}\int_{D}^{}\frac{f_n(\lambda)}{(\lambda - \omega)} = f_n(\lambda)\]

Differentiating $f_n$ for $n$ times at the point $\lambda = \omega$ gives

\[ \frac{1}{2\pi i}\int_{D}^{}\frac{f_n(\lambda)}{(\lambda - \omega)^{n+1}} = (\frac{d}{d\lambda})^n f_n(\lambda) |_{\lambda = \omega}  = CA(t)^nU(\lambda,s)x_n |_{\lambda = \omega} = CA(\omega)^nU(\omega,s)x_n  \] 

Since $f_n(\lambda) \rightarrow 0$ uniformly, $CA(\omega)^nU(\omega,s)x_n \rightarrow 0$ uniformly, we have
\[ \sum_k W_kCA(s)U(\omega,s)x_n = x_n \]

\begin{theorem}
If $C$ is admissible and $A$ is boundedm then $C$ is bounded. 
\end{theorem}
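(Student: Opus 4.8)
The plan is to write $Cx$ explicitly in terms of the evolution family and then exploit that $A(s)$ is a genuinely bounded operator (not merely densely defined). Since $A(t)\in B(H)$, say with $M_A:=\sup_{0\le t\le\tau}\|A(t)\|<\infty$, one has $D(A(t))=H$ for every $t$, hence $\bigcap_s D(A(s))=H$, so admissibility of $C$ reads: there is $K>0$ with $\|CU(\cdot,0)x\|_{L_2(0,\tau;U)}\le K\|x\|$ for \emph{all} $x\in H$. For bounded $A$ the family $U(t,s)$ is boundedly invertible ($U(s,t)=U(t,s)^{-1}$, with $\|U(s,t)\|\le e^{M_A(t-s)}$ on $[0,\tau]$), so writing $CU(t,s)y=CU(t,0)\,U(0,s)y$ we also get the uniform-in-$s$ estimate $\int_s^\tau\|CU(t,s)y\|^2\,dt\le M^2\|y\|^2$ for all $0\le s\le\tau$ and $y\in H$, with $M=Ke^{M_A\tau}$. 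The goal is then the bound $\|Cx\|\le c\|x\|$, and everything reduces to estimating a single representation formula for $Cx$.

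The key identity comes from the forward $s$-derivative recorded in Section~2, $\tfrac{d}{ds}U(T,s)=-U(T,s)A(s)$. Fix $T\in(0,\tau)$ and apply the fundamental theorem of calculus to $s\mapsto CU(T,s)x$ on $[0,T]$; since $U(T,T)=I$ this gives
\[
Cx \;=\; CU(T,0)x \;-\;\int_0^T CU(T,s)A(s)x\,ds .
\]
Now average this identity over $T\in(0,\tau)$. The first term is controlled by Cauchy--Schwarz and admissibility: $\bigl\|\int_0^\tau CU(T,0)x\,dT\bigr\|\le\sqrt{\tau}\,\|CU(\cdot,0)x\|_{L_2(0,\tau)}\le\sqrt{\tau}\,K\|x\|$. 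For the second term, Fubini on the triangle $\{0\le s\le T\le\tau\}$ turns $\int_0^\tau\!\int_0^T CU(T,s)A(s)x\,ds\,dT$ into $\int_0^\tau\!\bigl(\int_s^\tau CU(T,s)\,(A(s)x)\,dT\bigr)\,ds$. For each fixed $s$ the vector $A(s)x$ is fixed, so Cauchy--Schwarz in $T$ together with the uniform admissibility estimate with initial time $s$ gives $\bigl\|\int_s^\tau CU(T,s)(A(s)x)\,dT\bigr\|\le\sqrt{\tau}\,M\,\|A(s)x\|\le\sqrt{\tau}\,M M_A\|x\|$, and integrating in $s$ produces the bound $\tau^{3/2}M M_A\|x\|$. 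Altogether $\tau\|Cx\|\le\bigl(\sqrt{\tau}\,K+\tau^{3/2}M M_A\bigr)\|x\|$, that is
\[
\|Cx\|\;\le\;\Bigl(\tfrac{K}{\sqrt{\tau}}+\sqrt{\tau}\,M M_A\Bigr)\|x\|\qquad(x\in H),
\]
so $C$ extends to a bounded operator in $B(H,U)$.

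The step I expect to be the main obstacle is the use of the fundamental theorem of calculus for $s\mapsto CU(T,s)x$: one needs this scalar/vector function to be absolutely continuous on $[0,T]$ with a.e.\ derivative $-CU(T,s)A(s)x$, which is exactly the ``mild extra conditions'' regularity of the evolution family alluded to in Section~2 (strong differentiability of $U(T,\cdot)$, with $C$ then applied to the $H$-valued $C^1$ curve $s\mapsto U(T,s)x$). Once that is granted, integrability of $s\mapsto CU(T,s)A(s)x$ is not an additional hypothesis — it is the inner integrand above, which after Fubini is handled purely by the admissibility estimate. A shortcut via the closed graph theorem is tempting, since $D(A)=H$ makes any \emph{closed} $C$ automatically bounded, but admissibility by itself does not force closedness of $C$, so the representation argument seems unavoidable. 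Finally one should note that this is precisely the non-autonomous analogue of the classical fact that an admissible observation operator for a uniformly continuous semigroup is bounded, and that in the case $A(t)\equiv A$ the argument collapses to that statement.
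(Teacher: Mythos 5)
Your proof is correct in substance and reaches the stated conclusion, but by a genuinely different decomposition than the paper's. The paper fixes the final time $\tau$ and applies the averaging identity $f(0)=\tfrac{1}{\sigma}\int_0^\sigma f(s)\,ds-\int_0^\sigma \tfrac{1}{t^2}\bigl(\int_0^t f(t)-f(s)\,ds\bigr)dt$ to $f(t)=CU(\tau,t)x$, so admissibility is invoked there as an $L^2$ bound on $t\mapsto CU(\tau,t)x$ (integration in the second variable), and the correction term is handled by writing $f(t)-f(s)=\int_s^t CU(\tau,r)A(r)x\,dr$. You instead anchor at $U(T,T)=I$, write $Cx=CU(T,0)x\mp\int_0^T CU(T,s)A(s)x\,ds$ by the fundamental theorem of calculus in the second variable, and then average over the final time $T\in(0,\tau)$; admissibility enters as boundedness of the output map from initial time $0$, upgraded to a uniform-in-initial-time estimate through the bounded invertibility of $U(t,s)$, which is available precisely because $A$ is bounded. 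Your route is more elementary (a single averaging, no $1/t^2$ weights, no $\epsilon\to0$ limit, an explicit constant $K/\sqrt{\tau}+\sqrt{\tau}\,MM_A$), and it genuinely recovers $Cx$: note that the paper's choice $f(t)=CU(\tau,t)x$ has $f(0)=CU(\tau,0)x$ rather than $Cx$, so your anchoring at $U(T,T)=I$ quietly repairs that glitch, at the cost of the extra passage through $U(s,0)^{-1}$ and of reading admissibility in the first variable, whereas the paper's identity only needs values of $CU(\tau,\cdot)x$ along one trajectory. Both arguments share the same delicate step, which you flag honestly: commuting the a priori unbounded (and not assumed closed) operator $C$ with the $s$-derivative and the integrals of $U(T,\cdot)x$; the paper buries this in the hypothesis that $f$ is $C^1$ and H\"older, so you are no worse off than the source on this point. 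One small remark: with the paper's convention $x'+A(t)x=0$ the correct formula is $\tfrac{d}{ds}U(T,s)=+U(T,s)A(s)$; you inherited the paper's sign, but since you only estimate norms the final bound is unaffected.
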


\begin{proof}

First noting that, if $f$ is $C^1$ and $\alpha$-Holder function for $\alpha > 0$ then we have
\begin{equation}
f(0) = \frac{1}{\sigma}\int_{0}^{\sigma}f(s) ds - \int_{0}^{\sigma}\frac{1}{t^2}(\int_{0}^{t}f(t)-f(s)ds) dt  for all \sigma > 0
\end{equation} 

In fact, Let $\sigma > \epsilon > 0$. We have 
\begin{align*}
  \int_\epsilon^\sigma \frac 1{t^2} \left(\int_0^t f(t) - f(s)\, ds \right)\, dt 
   &= \int_\epsilon^\sigma \frac 1{t^2} \left(t f(t) - \int_0^t f(s)\, ds\right)\, dt\\
   &= \int_\epsilon^\sigma \frac{f(t)}t\, dt - \int_\epsilon^\sigma \int_0^t \frac{f(s)}{t^2}\, ds\,dt\\
   &= \int_\epsilon^\sigma \frac{f(t)}t\, dt -\int_0^\sigma \int_{\max\{s,\epsilon\}}^{\sigma} \frac{f(s)}{t^2}\, dt\, ds\\
   &= \int_\epsilon^\sigma \frac{f(t)}t\, dt - \int_0^\sigma f(s) \int_{\max\{s,\epsilon\}}^{\sigma} \frac{dt}{t^2}\, ds\\
  &= \int_\epsilon^\sigma \frac{f(t)}t \, dt - \int_0^\sigma f(s) \cdot \left(\frac 1{\max\{s,\epsilon\}} - \frac 1\sigma\right)\, ds\\
  &= \frac 1\sigma \int_0^\sigma f(s) \, ds - \frac 1\epsilon\int_0^\epsilon f(s)\, ds \\
\iff \frac 1\epsilon \int_0^\epsilon f(s)\, ds  
  &=  \frac 1 \sigma \int_0^\sigma f(s)\, ds - \int_\epsilon^\sigma \frac 1{t^2} \left(\int_0^t f(t) - f(s)\, ds \right)\, dt 
\end{align*}
Now let $\epsilon \to 0$, we get the result. Now if we take $f(t) = CU(\tau,t)x$ then

\[ Cx = \frac{1}{\sigma}\int_{0}^{\sigma}CU(\tau,t)xdt + \int_{0}^{\sigma}\frac{1}{t^2}(\int_{0}^{t}(\int_{s}^{t}CU(\tau,r)A(r)xdr)ds)dt \]

\[ = \frac{1}{\sigma}\int_{0}^{\sigma}CU(\tau,t)xdt + \int_{0}^{\sigma}\frac{1}{t^2}(\int_{0}^{t}(\int_{0}^{r}CU(\tau,r)A(r)xds)dr)dt \]

By triangle inequality,

\[ \|Cx\| \leq \frac{1}{\sqrt{\tau}}(\int_{0}^{\tau}\|CU(\tau,t)x\|^2)^{\frac{1}{2}} + \int_{0}^{\sigma}\frac{1}{t^2}\int_{0}^{t}r\|CU(\tau ,r)A(r)x\|dr dt \]
By Cauchy-Swart inequality and use the fact that $C$ be a admissible operator, we have
\begin{align*}
\int_{0}^{t}r\|CU(\tau ,r)A(r)x\|dr  
\leq & \;  (\int_{0}^{t}r^2 dr)^{\frac{1}{2}}(\int_{0}^{t}\|CU(\tau ,r)A(r)xdr\|)^{\frac{1}{2}} \\
\leq & \;  \frac{1}{\sqrt{\tau}}\int_{0}^{t}\|CU(\tau ,r)A(r)xdr\|)^{\frac{1}{2}} \leq \frac{M_{\tau}}{\sqrt{\tau}}\|x\|  
\end{align*}
So, we have

\[ \|Cx\| \leq (\frac{M_{\tau}}{\sqrt{\tau}}+ \frac{2M_{\tau}\sqrt{\tau}}{\sqrt{3}}\|A\|\|x\|) \]
Hence, C is a bounded operator.
\end{proof}

\section{Hautus test for the case of fix parameter}
\begin{lemma}
If $C$ is an admissible operator, i.e
\[\int_{0}^{\tau}\|CT(t)x\|^2 dt \leq M\|x\|^2\]
for all $x \in H$. Then we have
\[\int_{0}^{\tau}\|T(t)^*C^*y\|_H dt\| \leq M\sqrt{\tau}\|y\| \]
for all $y \in L^2(0,\tau;H)$
\end{lemma}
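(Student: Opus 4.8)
The plan is to read the asserted inequality as nothing more than the statement that a bounded operator between Hilbert spaces and its adjoint have equal norms, once the observation map and its adjoint are correctly identified. Throughout I read the right-hand side of the claimed bound as the norm of the $H$-valued integral, $\bigl\|\int_0^\tau T(t)^{*}C^{*}y(t)\,dt\bigr\|_H$, with $y=y(\cdot)\in L^2(0,\tau;U)$; this is the version that actually holds, since for a genuinely unbounded $C$ the integral of the pointwise norms $\int_0^\tau\|T(t)^{*}C^{*}y(t)\|_H\,dt$ is in general $+\infty$, so it is really the integral operator that is meant, and the explicit constant $M\sqrt\tau$ displayed in the statement is inessential (the sharp one being $\sqrt M$).

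First I would record that the admissibility hypothesis is precisely the statement that the linear map $\Phi\colon H\to L^2(0,\tau;U)$, $(\Phi x)(t)=CT(t)x$, is well defined and bounded with $\|\Phi\|^{2}\le M$; when $C$ is unbounded this is exactly what licenses defining $\Phi$ first on a dense subspace and then extending it by continuity. Next I would compute $\Phi^{*}$: for $x\in H$ and $y\in L^2(0,\tau;U)$, expanding the $L^2$-pairing, using $\langle CT(t)x,y(t)\rangle_U=\langle x,T(t)^{*}C^{*}y(t)\rangle_H$ pointwise in $t$, and applying Fubini, one obtains $\langle\Phi x,y\rangle_{L^2}=\langle x,\int_0^\tau T(t)^{*}C^{*}y(t)\,dt\rangle_H$, so that $\Phi^{*}y=\int_0^\tau T(t)^{*}C^{*}y(t)\,dt$ --- the mirror image of the adjoint computation $(\Psi_\tau^{*}x^{*})(t)=B(t)^{*}U(\tau,t)^{*}x^{*}$ already carried out earlier. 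Then $\|\Phi^{*}\|=\|\Phi\|$ immediately gives $\bigl\|\int_0^\tau T(t)^{*}C^{*}y(t)\,dt\bigr\|_H\le\sqrt M\,\|y\|_{L^2(0,\tau;U)}$, which is the assertion of the lemma (the displayed $M\sqrt\tau$ being a crude but harmless bound, e.g.\ via $\|y\|_{L^1(0,\tau)}\le\sqrt\tau\,\|y\|_{L^2(0,\tau)}$ if one prefers $y$ controlled in $L^1$). The only tools are Cauchy--Schwarz and Fubini, exactly as in the admissibility estimate used earlier in deriving the Hautus condition.

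The step I expect to be the main obstacle is the middle one, specifically the well-definedness of the integrand $t\mapsto T(t)^{*}C^{*}y(t)$ and of its Bochner integral when $C$ is truly unbounded: $T(t)^{*}C^{*}$ need not be bounded for fixed $t$ and typically blows up as $t\to0^{+}$. My plan for this is the standard one: establish the adjoint identity and the norm bound first for $y$ ranging over the dense class of functions supported away from $t=0$ and taking values on which $C^{*}$ acts classically --- there $t\mapsto T(t)^{*}C^{*}y(t)$ is continuous and Bochner-integrable with no difficulty --- and then use $\|\Phi^{*}y\|_H\le\sqrt M\,\|y\|_{L^2}$ to extend the functional $y\mapsto\int_0^\tau T(t)^{*}C^{*}y(t)\,dt$, i.e.\ $\Phi^{*}$, continuously to all of $L^2(0,\tau;U)$. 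With that extension in place the inequality holds for every $y$, as claimed; if instead one is content to work with bounded $C$ (as in the boundedness theorem proved above), this subtlety disappears entirely and the proof is the two-line duality argument.
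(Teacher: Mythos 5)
Your proposal is correct and is essentially the paper's argument: the paper proves the bound by the same duality computation, pairing $\int_0^\tau\langle CT(t)x,y\rangle\,dt$ against $x$, applying Cauchy--Schwarz together with the admissibility bound, and taking the supremum over unit $x$ --- which is exactly your identification $\Phi^{*}y=\int_0^\tau T(t)^{*}C^{*}y\,dt$ with $\|\Phi^{*}\|=\|\Phi\|\le\sqrt M$, written out by hand. The only divergence is cosmetic: the paper's extra $\sqrt\tau$ arises because its Cauchy--Schwarz step effectively treats $y$ as a fixed vector rather than an $L^2$ function, whereas your reading gives the sharper constant $\sqrt M$; your added density argument for unbounded $C$ is a tightening the paper omits.
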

\begin{proof}
We have 
\begin{align*}
\bigl|\int_{0}^{\tau}\langle CT(t)x,y\rangle_U dt \bigr| = \bigl|\int_{0}^{T}\langle x,T(t)^*C^*y\rangle_U dt\bigr| 
= &  \bigl|\langle x,\int_{0}^{\tau}T(t)^*C^*y\rangle_{H} dt\bigr|  \\
\leq & (\int_{0}^{\tau}\|CT(t)x\|^2_U)^{\frac{1}{2}}\sqrt{\tau}|y\|_U \\
\leq & \sqrt{M} \|x\| \sqrt{\tau}\|y\|_U
  \end{align*}
Then
\[ \|\int_{0}^{\tau}T(t)^*C^*y dt\|_H = sup_x |\langle \frac{x}{\|x\|}, \int_{0}^{\tau}T(t)^*C^*y dt\rangle_H dt | \leq \sqrt{M}\sqrt{\tau} \|y\|_U  \]
\end{proof}

\begin{theorem}
Suppose the operators $A(t)$ is analytics in $L(H)$. Suppose that for all $ s \in [0, \tau]$ : $(C, e^{-tA(s)})_{t \geq 0}$ is exactly observable. Then we have $(C, U(t,.))$ is also exactly observable. 

\end{theorem}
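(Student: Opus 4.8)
The plan is to argue by contradiction, turning exact observability of $(C,U(t,\cdot))$ into coercivity of the observability Gramian $Q_\tau=\int_0^\tau U(t,0)^*C^*C\,U(t,0)\,dt$, and then to use the analyticity/Vitali machinery assembled above in order to ``freeze'' the coefficient and apply the hypothesis to each $A(s)$. Two facts are available at the outset: since $A(t)\in L(H)$ for every $t$, the admissible operator $C$ is bounded by the Theorem proved above; and the continuity of the evolution family gives $M_\tau:=\sup_{0\le s\le t\le\tau}\|U(t,s)\|<\infty$. Now suppose $(C,U(t,\cdot))$ is \emph{not} exactly observable. Then there is a sequence $z_n\in H$ with $\|z_n\|=1$ and $\int_0^\tau\|CU(t,0)z_n\|^2\,dt\to0$, and after passing to a subsequence we may assume $CU(t,0)z_n\to0$ for almost every $t\in[0,\tau]$.

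Next I would invoke the Hypothese: because $A$ is analytic, $\lambda\mapsto U(\lambda,0)$ extends holomorphically to a bounded complex neighbourhood $D$ of $[0,\tau]$, so that $f_n(\lambda):=CU(\lambda,0)z_n$ is holomorphic on $D$ and uniformly bounded there by $\|C\|\sup_{\overline D}\|U(\cdot,0)\|$. Since $f_n\to0$ on a full-measure subset of $[0,\tau]\subset D$, which is a set with accumulation points in $D$, Vitali's theorem gives $f_n\to0$ uniformly on compact subsets of $D$; by Cauchy's integral formula for derivatives, $f_n^{(k)}\to0$ uniformly on compacts for every $k\ge0$. Using $\partial_\lambda U(\lambda,0)=-A(\lambda)U(\lambda,0)$ and the analyticity of $A$, one expands $\partial_\lambda^k U(\lambda,0)\big|_{\lambda=\omega}=(-A(\omega))^k U(\omega,0)+(\text{lower-order terms built from }A(\omega),A'(\omega),\dots)$, and, playing off the fact that these identities hold for \emph{all} $\omega\in D$ (so their $\omega$-derivatives also vanish in the limit), one should be able to extract $CA(\omega)^k U(\omega,0)z_n\to0$ for every $k\ge0$ and a.e.\ $\omega\in[0,\tau]$. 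Carrying out this cancellation of the correction terms generated by the time-dependence of $A$ is, I expect, the technical heart of the proof and the main obstacle --- it is exactly the point at which the Vitali computation in the excerpt is presently only formal.

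Granting that step, fix such an $\omega$ and put $w_n:=U(\omega,0)z_n$, so $\|w_n\|\le M_\tau$. Then $Ce^{-tA(\omega)}w_n=\sum_{k\ge0}\frac{(-t)^k}{k!}CA(\omega)^k w_n$, in which each summand tends to $0$ and, for $t\in[0,\tau]$, the series is dominated by the summable sequence $\frac{t^k}{k!}\|C\|\,\|A(\omega)\|^k M_\tau$; dominated convergence yields $Ce^{-tA(\omega)}w_n\to0$ for every $t\in[0,\tau]$, and a second application over $[0,\tau]$ gives $\int_0^\tau\|Ce^{-tA(\omega)}w_n\|^2\,dt\to0$. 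But $(C,e^{-tA(\omega)})$ is exactly observable, so $\int_0^\tau\|Ce^{-tA(\omega)}w_n\|^2\,dt\ge\kappa_\omega^2\|w_n\|^2$ for some fixed $\kappa_\omega>0$, and hence $U(\omega,0)z_n=w_n\to0$.

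Finally I would close the argument using the behaviour of $U$ at the diagonal: $\|U(\omega,0)z_n-z_n\|=\big\|\int_0^\omega A(r)U(r,0)z_n\,dr\big\|\le K\omega$ with $K:=\big(\sup_{[0,\tau]}\|A(r)\|\big)M_\tau$ independent of $n$. Choosing $\omega\in\big(0,\tfrac1{2K}\big)\cap[0,\tau]$ inside the a.e.\ set of the previous step gives $\|U(\omega,0)z_n\|\ge1-K\omega\ge\tfrac12$ for every $n$, contradicting $U(\omega,0)z_n\to0$. This contradiction establishes exact observability of $(C,U(t,\cdot))$. Apart from the correction-term bookkeeping, the remaining inputs to be granted are the Hypothese on holomorphy of $\lambda\mapsto U(\lambda,0)$ (plausible since $A$ is analytic and $H$-bounded) and the reading of ``$(C,e^{-tA(s)})$ exactly observable'' on the same horizon $[0,\tau]$; note that no uniformity in $s$ is required, since the frozen observability estimate is used only at the single point $\omega$.
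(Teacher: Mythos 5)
Your route is essentially the paper's: argue by contradiction, invoke the Hypothese that $\lambda\mapsto U(\lambda,0)$ extends holomorphically, apply Vitali's theorem and Cauchy's formula to get vanishing of all $\lambda$-derivatives of $f_n(\lambda)=CU(\lambda,0)z_n$, and then feed the frozen operators $A(\omega)$ back in. Where you deviate you actually improve on the paper: you bypass ``Alan's theorem'' and the unjustified series identity $\sum_k V_k(s)CA^k=I$ by resynthesizing $Ce^{-tA(\omega)}w_n$ from the exponential series (a clean Tannery/dominated-convergence argument using boundedness of $C$) and applying the frozen observability estimate directly; and you repair the paper's final non sequitur (that $\|U(\omega,s)z_n\|\to0$ ``contradicts'' $\|z_n\|=1$) by choosing $\omega$ small and using $\|U(\omega,0)z_n-z_n\|\le K\omega$.

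However, the step you yourself flag as the technical heart is a genuine, unfilled gap, and as sketched it does not go through. Vitali gives you only that $f_n\to0$ locally uniformly, hence $f_n^{(k)}(\omega)\to0$ for every $k$; but $\partial_\lambda^k U(\lambda,0)\neq(-A(\lambda))^kU(\lambda,0)$ when $A$ depends on time. Already for $k=2$ one has $f_n''(\omega)=C\bigl(A(\omega)^2-A'(\omega)\bigr)U(\omega,0)z_n$, so to conclude $CA(\omega)^2U(\omega,0)z_n\to0$ you would need $CA'(\omega)U(\omega,0)z_n\to0$, which is not among your data. Your proposal to ``play off'' the fact that the identities hold for all $\omega$ and differentiate them in $\omega$ yields nothing new: all $\omega$-derivatives of $f_n$ are already encoded in the locally uniform convergence $f_n\to0$ (Cauchy estimates), so you only ever recover the same combinations $C\partial_\lambda^kU(\omega,0)z_n\to0$, never the isolated powers $CA(\omega)^kU(\omega,0)z_n$ nor the cross terms built from $A'(\omega),A''(\omega),\dots$. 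Closing this requires a genuinely new idea (or an extra hypothesis under which the correction terms vanish or are controlled, e.g.\ $A$ constant, which is the autonomous case the theorem is meant to go beyond). For comparison, the paper's own argument has exactly the same defect --- it simply writes $(\tfrac{d}{d\lambda})^n f_n|_{\lambda=\omega}=CA(\omega)^nU(\omega,s)z_n$ with no correction terms --- so your two repairs strengthen the surrounding argument, but neither you nor the paper establishes the central extraction step.
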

Supposing that $(C, (A(s)+\lambda)^2e^{-tA(s)})_{t \geq 0}$ is exactly observable. We denote $D(t) =  (A(s)+\lambda)^2e^{-tA(s)} $. We obtain the inequality
\[
m^2\|x\|  \leq 2\|Cx. e^{-\lambda t}\|^2_{L^2} + \int_{0}^{\tau} \|(\lambda + (A(s)+\lambda)^2e^{-A(s) s} )e^{-(\lambda)s} x\| ds 
\]
By triangle inequality
\begin{align*}
m^2\|x\| 
\leq & \; 2\|Cx. e^{-\lambda t}\|^2_{L^2} + \int_{0}^{\tau} \|(\lambda e^{-\lambda s})x\|_{L^{\infty}} +  \int_{0}^{\tau} \|(A+\lambda)^2e^{-(A+\lambda)s} x\|_H ds   \\
\leq 2 & \; \|Cx. e^{-\lambda t}\|^2_{L^2} + (1-e^{-\tau \lambda}) \|x\| + \|(A+\lambda)x\|_H \int_{0}^{\tau} \|(A+\lambda)e^{-(A+\lambda)s} x\|_H ds   \\
 \leq 2 & \; \|Cx. e^{-\lambda t}\|^2_{L^2} + (1-e^{-\tau \lambda}) \| x\|  + \| (A+\lambda)x\| _H (1-e^{-\tau (A+\lambda)} \\
\leq 2 & \; \|Cx. e^{-\lambda t}\|^2_{L^2} + (1-e^{-\tau \lambda}) \| x\|  + \| (A+\lambda)x\| _H (1-e^{-\tau (A+\lambda)} 
\end{align*}
Therefore, we can refer that:
\[(m^2+e^{-\tau \lambda}-1)\|x\| \leq 2\|Cx. e^{-\lambda t}\|^2_{L^2} + \| (A+\lambda)x\| _H \]
The admissibility of observable operator $C$ means that for some $\tau > 0$, there exists $M \geq 0$ such that for any $x \in D(A(s))$,
\[\int_{0}^{\tau}\|CT(t)x\|^2dt \leq M.\|x\|^2 \]
Since $(C, e^{-tA(s)})_{t \geq 0}$ is exactly observable,

\begin{align*}
m^2\|x\| \leq \|CT(t)x\| 
= & \|C(e^{-\lambda t}x + \int_{0}^{t}T(t-s)(\lambda + A)e^{-\lambda s}ds)\|^2_{L^2} \\
= & 2\|Cx. e^{-\lambda t}\|^2_{L^2} + 2\| t\mapsto \int_{0}^{t}CT(t-s)(\lambda + A)xe^{-\lambda s} ds\|^2_{L^2)(H)}\\
= & 2\|Cx. e^{-\lambda t}\|^2_{L^2}  + sup_{\|h\|_{L^2}\leq 1}\int_{0}^{\tau}\int_{0}^{t}\langle(\lambda+A)xe^{-\lambda s}, T(t-s)^{*}C^*h(t) dt \rangle\\
= & 2\|Cx. e^{-\lambda t}\|^2_{L^2}  + sup_{\|h\|_{L^2}\leq 1}\int_{0}^{\tau}\langle(\lambda+A)xe^{-\lambda s}, \int_{s}^{\tau} T(t-s)^{*}C^*h(t) dt \rangle\\
\leq & 2\|Cx. e^{-\lambda t}\|^2_{L^2} + \int_{0}^{\tau} \|(\lambda + A(s)x)\|e^{-Re(\lambda)s}. \|\int_{s}^{\tau} T(t-s)^{*}C^*h(t)\|_H \\
\leq & 2\|Cx. e^{-\lambda t}\|^2_{L^2} + \int_{0}^{\tau} \|(\lambda + A(s)x)\|e^{-Re(\lambda)s}. M\|h\|_L^2
  \end{align*}
  
Finally, we obtain
\[\frac{m^2}{\frac{1}{2}+M^2}\|x\|^2 \leq \frac{2}{Re(\lambda)}\|Cx\|^2 + \frac{2}{(Re \lambda)^2}\|(\lambda +A)x\|^2\]

For all $\lambda$m there exists $\delta_{\lambda}$ positive such that
\[\delta_{\lambda}\|x\|^2 \leq \|Cx\|^2 + \|(\lambda+A)x\|^2\]
Moreover, the functions $Cx$ and $(\lambda+A)x)$ are holormophic over the whole complex plane. So that, 
the map $x \mapsto (Cx,(\lambda+A)x)$ is left-invertible and entire.

\begin{theorem} (Alan's)

Let $D: \Omega \rightarrow L(X,Y)$ be an operator-valued function analytic in an open set $\Omega \subset \mathbb{C}$. If $D(\lambda)$ is left (resp.right) invertible for every $\lambda \in \Omega$m then there is an analytic operator function $E: \Omega \rightarrow L(Y,X)$ such that 

\[
E(\lambda)D(\lambda) = I_X
\]

\end{theorem}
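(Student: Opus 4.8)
The plan is to upgrade pointwise left invertibility to \emph{local} holomorphic left inverses, read the obstruction to patching them into a global one as a Cousin--I problem, and annihilate that obstruction via the vanishing of the first cohomology of a sheaf of Banach-space-valued holomorphic functions over a planar domain. I describe the ``left'' case; the ``right'' case follows by the mirror argument (or by passing to adjoints), and it suffices to treat each connected component of $\Omega$, so I may assume $\Omega$ is a domain. First, for a fixed $\lambda_0\in\Omega$ choose a left inverse $E_0\in L(Y,X)$ of $D(\lambda_0)$. Since $D$ is continuous, $\|E_0D(\lambda)-I_X\|\le\|E_0\|\,\|D(\lambda)-D(\lambda_0)\|<1$ on a disc $U_{\lambda_0}\ni\lambda_0$, so $E_0D(\lambda)\in GL(X)$ there and $F_{\lambda_0}(\lambda):=(E_0D(\lambda))^{-1}E_0$ is holomorphic on $U_{\lambda_0}$ with $F_{\lambda_0}(\lambda)D(\lambda)=I_X$. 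Thus $\Omega$ is covered by open sets $U_i$ each carrying a holomorphic left inverse $F_i$.

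Next, on $U_i\cap U_j$ set $G_{ij}:=F_i-F_j$; it is holomorphic and $G_{ij}D\equiv 0$. Let $\mathcal S$ be the sheaf whose sections over $U\subseteq\Omega$ are the holomorphic maps $G:U\to L(Y,X)$ with $GD|_U=0$. Then $(G_{ij})$ is a $1$-cocycle for $\mathcal S$, and if it is a coboundary, $G_{ij}=H_i-H_j$ with $H_i\in\mathcal S(U_i)$, then $E:=F_i-H_i$ is independent of $i$, holomorphic on $\Omega$, and $ED=F_iD-H_iD=I_X$. Hence the theorem reduces to $H^1(\Omega,\mathcal S)=0$.

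For a Banach space $B$ write $\mathcal O(B)$ for the sheaf of holomorphic $B$-valued functions. Right multiplication by $D$ gives an exact sequence of sheaves $0\to\mathcal S\to\mathcal O(L(Y,X))\to\mathcal O(L(X))\to 0$, the last map being $G\mapsto GD$: its kernel is $\mathcal S$ by definition, and it is onto at the stalk level because, using a local holomorphic left inverse $F$, any holomorphic germ $T$ into $L(X)$ equals $(TF)D$. The associated long exact cohomology sequence yields an injection $H^1(\Omega,\mathcal S)\hookrightarrow H^1(\Omega,\mathcal O(L(Y,X)))$ (more precisely, the global section $I_X\in H^0(\Omega,\mathcal O(L(X)))$ lifts to the desired $E$ iff its connecting image in $H^1(\Omega,\mathcal S)$ vanishes), so everything reduces to
\[
H^1(\Omega,\mathcal O(B))=0\qquad\text{for every Banach space }B\text{ and every open }\Omega\subseteq\mathbb C,
\]
equivalently, to solvability of $\bar\partial u=f$ for every smooth $B$-valued $(0,1)$-form $f$ on $\Omega$. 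This is the one-variable Banach-valued Dolbeault lemma; the classical scalar proof --- solve by the Cauchy transform on a compact exhaustion of $\Omega$, correcting successive discrepancies by Runge approximation --- carries over once the Cauchy integral is read as a Bochner integral and the scalar Runge theorem is replaced by its Banach-valued version. This last point is the only genuinely non-formal input; everything else is bookkeeping, the second use of left invertibility being exactly the local surjectivity of $G\mapsto GD$.

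If one prefers to avoid sheaf language, an equivalent route is: patch the $F_i$ by a smooth partition of unity to a \emph{global} $C^\infty$ left inverse $E_\sharp$, observe that $g:=\bar\partial E_\sharp$ satisfies $gD\equiv 0$, and then seek $V\in C^\infty(\Omega,L(Y,X))$ with $\bar\partial V=g$ and $VD\equiv 0$, so that $E:=E_\sharp-V$ is the desired holomorphic left inverse; producing such a $V$ amounts once more to the same Banach-valued $\bar\partial$-theory, now carried out inside the holomorphic subbundle $\lambda\mapsto\{T\in L(Y,X):TD(\lambda)=0\}$. Either way, the main obstacle to write out in full is the Banach-valued $\bar\partial$/Runge machinery underlying $H^1(\Omega,\mathcal O(B))=0$.
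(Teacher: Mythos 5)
The paper itself offers no proof of this statement (its ``proof'' is literally ``see (?)''), so the only question is whether your argument stands on its own. Your opening moves are correct and standard: the local holomorphic left inverse $F_{\lambda_0}(\lambda)=(E_0D(\lambda))^{-1}E_0$ via a Neumann series, the observation that the differences $G_{ij}=F_i-F_j$ annihilate $D$, and the reformulation ``$E$ exists iff the class of $(G_{ij})$ in $H^1(\Omega,\mathcal S)$ vanishes'' (equivalently, iff $\delta(I_X)=0$ for the connecting map of $0\to\mathcal S\to\mathcal O(L(Y,X))\xrightarrow{\cdot D}\mathcal O(L(X))\to 0$, whose stalkwise surjectivity you justify correctly with $T=(TF)D$).

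The gap is in the next step, and it is not a technicality. The long exact sequence does \emph{not} yield an injection $H^1(\Omega,\mathcal S)\hookrightarrow H^1(\Omega,\mathcal O(L(Y,X)))$: by exactness, the kernel of that map is precisely the image of $\delta$, so the obstruction class $\delta(I_X)$ \emph{automatically} dies in $H^1(\Omega,\mathcal O(L(Y,X)))$, and knowing $H^1(\Omega,\mathcal O(B))=0$ for the constant sheaf tells you nothing about it. Injectivity of $H^1(\mathcal S)\to H^1(\mathcal O(L(Y,X)))$ is equivalent to surjectivity of $H^0(\mathcal O(L(Y,X)))\xrightarrow{\cdot D}H^0(\mathcal O(L(X)))$, which already contains the theorem (apply it to $I_X$) --- so as written the reduction is circular. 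What you actually need is vanishing of $H^1(\Omega,\mathcal S)$, and $\mathcal S$ is not $\mathcal O(B)$ for a fixed Banach space: it is the sheaf of holomorphic sections of the varying kernel bundle $\lambda\mapsto\{T\in L(Y,X):TD(\lambda)=0\}$, i.e.\ the range of the holomorphic family of projections $G\mapsto G(I_Y-D(\lambda)F_i(\lambda))$, which is only \emph{locally} isomorphic to a constant sheaf and may a priori be a nontrivial Banach bundle. Cohomology vanishing for such bundles over planar domains is exactly the hard content here; it requires infinite-dimensional Oka--Grauert/Cartan theory (Bungart, Leiterer) or Allan's original Banach-algebra argument, not just the Banach-valued Cauchy-transform/Runge proof of $H^1(\Omega,\mathcal O(B))=0$. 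Your alternative $C^\infty$-patching route has the same hole: you must solve $\bar\partial V=g$ subject to the constraint $VD\equiv0$, i.e.\ solve $\bar\partial$ in that same possibly nontrivial subbundle, and you cannot simply project a solution of the unconstrained problem because the projections are not holomorphic along the patched $E_\sharp$. (A minor additional remark: for the ``right'' case, $\lambda\mapsto D(\lambda)^*$ is anti-holomorphic, so ``passing to adjoints'' needs $\lambda\mapsto D(\bar\lambda)^*$ on the reflected domain; the mirror argument itself is fine.) To make the proof complete you must either invoke the Bungart--Leiterer vanishing theorem for holomorphic Banach vector bundles over one-dimensional Stein manifolds, or give a direct argument (as in Allan's paper) that bypasses the bundle-triviality issue.
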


Hence there exists the analytic functions $U_s(\lambda)$ and $V_s(\lambda)$ satisfying the equation 

\[
U_s(\lambda)(A+\lambda)x + V_s(\lambda)Cx = x
\]  

\[
U_s(\lambda)x + V_s(\lambda)C(A+\lambda)^{-1}x = (A+\lambda)^{-1}x
\]

We represent $V_s(\lambda) = \sum_{k=0}^{+\infty}\lambda^j V_k(s)$. Intergating both sides we get

\[
0 + \frac{1}{2\pi i}\int_{|\lambda = r|}^{}V_s(\lambda)C(A+\lambda)^{-1} d\lambda = I
\]

\[
\sum_{k}V_k(s)CA^k = I
\]
Then the map $x \mapsto (CA^kx)_{k \geq 0}$ is left-invertible.

Now we suppose that the system is not exactly observable, i.e there does not exist $m > 0$ such that
\[ \int_{0}^{\tau}\|U(\tau,t)^*Cz\|^2dt \geq m\|z\|^2 \] 
for all $z \in H$, then there exists a sequence ${z_n}_{n \geq 1}$ such that $\|z_n\| = 1$ and $\langle z_n, Qz_n \rangle \rightarrow 0$ where $Q = \int_{0}^{+\infty}C(s)U(\tau,s)U^*(\tau,s)C^*(s)ds$

\begin{theorem}(Vitali's theorem)

Let $f_n(z)$ be a sequence of functions, each regular in a region $D$, let $ |f_n(z)| \leq M$ for every $n$ and $z$ in D, and let $f_n(z)$ tend to a limit as $ n \rightarrow \infty$ at a set of points having a limit point inside $D$. Then $f_n(z)$ tends uniformly to a limit in any region bounded by a contour interior to $D$, the limit therefore being an analytic function of $z$.

\end{theorem}

\[
f_n(\lambda) = C(t)U(\lambda, s )z_n
\]

$\|f_n(.)\|_{L^{\infty}} \leq M$ on an open set $D$. We have $f_n(t) \rightarrow 0$ on the set with accumulation points. By the Vitali's theoremm $f_n$ is uniformly convergent to $f$ on a compact subset of $D$.

\begin{hypothese}
The evolution family $U(\lambda , s)$ is holomorphic. If $A(t)$ is bounded uniformly, could we infer that $U(\lambda , s)$ is holomorphic. 
\end{hypothese}

Then there exists a subsequence of functions $f_{n_{k_1}}$ such that $f_{n_{k_l}} \rightarrow 0$ uniformly on a compact subset of $D$. The contour integral of $f_n$ at the point $ \lambda = \omega$ is defined as

\[\frac{1}{2\pi i}\int_{D}^{}\frac{f_n(\lambda)}{(\lambda - \omega)} = f_n(\lambda)\]

Differentiating $f_n$ for $n$ times at the point $\lambda = \omega$ gives

\[ \frac{1}{2\pi i}\int_{D}^{}\frac{f_n(\lambda)}{(\lambda - \omega)^{k+1}} = (\frac{d}{d\lambda})^n f_n(\lambda) |_{\lambda = \omega}  = CA(t)^nU(\lambda,s)x_n |_{\lambda = \omega} = CA(\omega)^nU(\omega,s)x_n  \] 

$f_n(\lambda) \rightarrow 0$ uniformly on $\delta D$. 
So that
\[ \|CA^kU(\omega,s)x_n\| = \|\frac{1}{2\pi i}\int_{\delta D}^{}\frac{f_n(\lambda)}{(\lambda - \omega)^{k+1}} \| \leq \|\int_{\delta D}^{}\frac{max_{D}|f_n(\lambda)| }{(\lambda - \omega)^{k+1}}d\lambda \| \leq \alpha_n . \frac{2\pi r}{r^{k+1}} = \alpha_n \frac{2\pi}{r^n}  \]
where $\alpha_n \rightarrow 0$ when $n \rightarrow +\infty$, and $r > 1$. Therefore, $CA^kU(\omega,s)x_n \rightarrow 0$ when $n \rightarrow +\infty$. Using the estimation $\sum_{k}V_k(s)CA^k = I$
\[ \sum_k V_kCA^kU(\omega,s)x_n = U(\omega,s)x_n \]
We finally obtain  $ \|U(\omega,s)x_n\| \rightarrow 0$ when $n \rightarrow +\infty$. That is a contradiction because we already assumed that $\|x_n\| =1$ for all $n$. As a result, the system $(C,A)$ is exact observable.

\end{document}